\newtheorem{definition}{Definition}[section]
\newtheorem{theorem}[definition]{Theorem}
\theoremstyle{definition}
\newtheorem{remark}[definition]{Remark}
\newcommand{\noi}{\noindent}
\newcommand{\ra}{\rightarrow}
\newcommand{\mh}{\mu_{_H}}
\newcommand{\nh}{\nu_{_H}}
\newcommand{\su}{\mbox{supp}}
\newcommand{\sh}{{_H}}
\newcommand{\ep}{\varepsilon}
\begin{document}

\title[Topological Amenability of Semihypergroups]{Topological Amenability of Semihypergroups}

\author[C.~Bandyopadhyay]{Choiti Bandyopadhyay}

\address{Department of Mathematics \& Statistics, Indian Institute of Technology Kanpur, India.\\Current address: Department of Mathematics, SRM University AP, India.}

\email{choiti@ualberta.ca, choiti.b@srmap.edu.in}

\keywords{semihypergroups, hypergroups, topological invariance, topological left amenability, invariant means, restriction of convolution}
\subjclass[2020]{Primary 43A62, 43A07, 43A10, 43A05, 22A20, 20M12; Secondary 46L05, 46G12, 46E27}

\begin{abstract}
In this article, we introduce and explore the notion of topological amenability in the broad setting of (locally compact) semihypergroups. We acquire several stationary, ergodic and Banach algebraic characterizations of the same in terms of convergence of certain probability measures, total  variation of  convolution with probability measures and translation of certain functionals, as well as the F-algebraic properties of the associated measure algebra. We further investigate the interplay between restriction of convolution product and convolution of restrictions of measures on a sub-semihypergroup.  Finally, we discuss and characterize topological amenability of sub-semihypergroups in terms of certain invariance properties attained on the corresponding measure algebra of the parent semihypergroup. This in turn provides us with an affirmative answer to an open question posed by J. Wong in 1980. 
\end{abstract}

\maketitle

\section{Introduction}
\label{intro}

 The study of (topological) amenability on a given category of objects, have been one of the fundamental areas of research in abstract harmonic analysis. As a result, an extensive research has been carried out on different equivalent criteria and consequences  exhibited by  (topological) left/right amenable groups, semigroups and hypergroups (see \cite{ MI, DA, JP, SK, WO2,  WO0} for details). This article extends the concept of topological amenability to the broader class of (semitopological) semihypergroups, and attains several characterizations of the same in terms of certain stationary, ergodic, Banach algebraic and hereditary properties exhibited by the corresponding measure algebras. In addition, given a convolution of complex Radon measures on the underlying space, we explore the interplay between the restriction of convolution and convolution of restrictions on certain subsets, to eventually provide an estimate on how far apart these measures can be in terms of total variation.

 A semihypergroup is essentially a locally compact space, where the space of complex Radon measures admits a certain  associative convolution-algebra. Of course, as the name suggests, one can regard a semihypergroup as a natural extension to the classical theory of  locally compact semigroups, where the product of two points is a compact set, rather than a single point. The study of  (semi)hypergroups was initiated by Dunkl \cite{DU}, Jewett \cite{JE} and Spector \cite{SP} independently around 1970's and has seen substantial progress since then, as intriguing connections to natural structures arising from different fields of mathematics are being discovered till date \cite{BH}. 
 
 Other than locally compact semigroups, groups and hypergroups, some important examples of semihypergroups include certain coset spaces, homogeneous spaces and certain orbit spaces arising in different areas of research such as Lie groups,   dynamical systems, topological graph theory and partial differential equations, to name a few (see \cite{CB1, BH, JE} for examples explaining why these spaces fail to even have a hypergroup structure). The intrinsically different nature of the convolution algebra (and hence of the subsequent topology) when compared to the natural convolution of measures on a locally compact semigroup, prevents the existing classical results of semigroup theory from extending readily to this broader class of objects. We have organized the article as the following.

The next, i.e, second section of the article contains a basic set of preliminary notations and definitions that we would need for the rest of the article. In the third section, we first introduce topological invariant means and amenability in the category of semitopological semihypergroups, followed by a brief exposition of F-algebras and associated amenability concepts. The first main theorem (Theorem \ref{mainthm0}) explores several equivalent criteria for the existence of a topological left invariant mean (TLIM) on  a semihypergroup, in terms of certain stationary, ergodic and Banach algebraic properties.  

The rest of this section is dedicated to understanding the relation between a sub-semihypergroup and its parent semihypergroup, in terms of topological invariance as well as extensions and restrictions of certain measures and functions. Theorem \ref{convr} provides necessary insight into how the restrictions of convolution of measures interact with the convolutions of the restricted measures. This in turn provides us with the assertion  that these measures indeed coincide whenever the semihyergroup in question is a hypergroup. Using this phenomenon, in the final result of this section (Theorem \ref{thm2}) we acquire some useful equivalent criteria for a sub-semihypergroup $H$ to admit a TLIM, in terms of certain $H$-invariant means of the parent semihypergroup. This extends a result of Wong \cite{WO} even in the case of locally compact semitopological semigroups.





\section{Preliminary}
\label{Preliminary}

\noi Here we first list some of the preliminary set of basic notations that we will use throughout the text, followed by a brief introduction to the tools and concepts needed for the formal definition of a semihypergroup (referred to as `semiconvo' in \cite{JE}). All the topologies throughout this text are assumed to be Hausdorff, unless otherwise specified.

\vspace{0.03in}

\noi For any locally compact Hausdorff topological space $X$, we denote by $M(X)$ the space of all regular complex Borel measures on $X$, where $M^+(X), P(X)$ and $P_c(X)$ denote the subsets of $M(X)$ consisting of all finite non-negative regular Borel measures, the probability measures and probability measures with compact support on $X$, respectively. Moreover, $C(X)$ and $ C_c(X)$ denote the function spaces of all  bounded continuous and compactly supported continuous  functions  on $X$ respectively.

\vspace{0.03in}

\noi Next, we introduce two very important topologies on the positive measure space and the space of compact subsets for any locally compact topological space $X$. Unless mentioned otherwise, we will always assume these two topologies on the respective spaces.

\vspace{0.03in}

\noi The \textit{cone topology} on $M^+(X)$ is defined as the weakest topology on $M^+(X)$ for which the maps $ \mu \mapsto \int_X f \ d\mu$ is continuous for any $f \in C_c^+(X)\cup \{1_X\}$ where $1_X$ denotes the characteristic function of $X$. Note that if $X$ is compact then it follows immediately from the Riesz representation theorem that the cone topology coincides with the weak*-topology on $M^+(X)$ in this case.

\vspace{0.03in}

\noi We denote by $\mathfrak{C}(X)$ the set of all compact subsets of $X$. The \textit{Michael topology} \cite{MT} on $\mathfrak{C}(X)$ is defined to be the topology generated by the sub-basis $\{\mathcal{C}_U(V) : U,V \textrm{ are open sets in } X\}$, where
$$\mathcal{C}_U(V) = \{ C\in \mathfrak{C}(X) : C\cap U \neq \O, C\subset V\}.$$

\noi Note that $\mathfrak{C}(X)$ actually becomes a locally compact Hausdorff space with respect to this natural topology. 
 For any locally compact Hausdorff space $X$ and any element $x\in X$, we denote by $p_x$ the point-mass measure or the Dirac measure at the point $\{x\}$.

\vspace{0.03in}

\noi For any three locally compact Hausdorff spaces $X, Y, Z$, a bilinear map $\Psi : M(X) \times M(Y) \rightarrow M(Z)$ is called \textit{positive continuous} if the following holds :
\begin{enumerate}
\item $\Psi(\mu, \nu) \in M^+(Z)$ whenever $\mu \in M^+(X), \nu \in M^+(Y)$.
\item The map $\Psi |_{M^+(X) \times M^+(Y)}$ is continuous.
\end{enumerate}


\noi Now we are ready to state the formal definition for a semihypergroup. Note that we follow Jewett's notion \cite{JE} in terms of the definitions and notations, in most cases.


\begin{definition}\label{shyper}\textbf{(Semihypergroup)} A pair $(K,*)$ is called a (topological) semihypergroup if they satisfy the following properties:


\begin{description} 
\item[(A1)] $K$ is a locally compact Hausdorff space and $*$ defines a binary operation on $M(K)$ such that $(M(K), *)$ becomes an associative algebra.

\item[(A2)] The bilinear mapping $* : M(K) \times M(K) \rightarrow M(K)$ is positive continuous.

\item[(A3)] For any $x, y \in K$ the measure $p_x * p_y$ is a probability measure with compact support.

\item[(A4)] The map $(x, y) \mapsto \mbox{ supp}(p_x * p_y)$ from $K\times K$ into $\mathfrak{C}(K)$ is continuous.
\end{description}
\end{definition}


\noi For any $A,B \subset K$ the convolution of subsets is defined as the following:
$$A*B := \cup_{x\in A, y\in B} \ supp(p_x*p_y)  .$$
It is known \cite{JE, MT} that this convolution is a continuous binary operation on $\mathfrak{C}(K)$ and for any Borel subspace $E\subseteq K$, we have that  $\mathfrak{C}(E)$ is again a Borel subspace of $\mathfrak{C}(K)$, with $\mathfrak{C}(E)$ being closed (resp. compact) if $E$ is closed (resp. compact) in $K$.

A pair $(K, *)$ is called a \textit{semitopological semihypergroup} if if it satisfies all the conditions of Definition \ref{shyper} with property \textbf{(A$2$)} and \textbf{(A$4$)} replaced respectively by the following:
\begin{description}
\item[(A2$'$)] The map $(\mu, \nu) \mapsto \mu*\nu$ is positive and separately continuous on $M^+(K)\times M^+(K)$.
\item[(A4$'$)] The map $(x, y) \mapsto \mbox{ supp}(p_x * p_y)$ from $K\times K$ into $\mathfrak{C}(K)$ is separately continuous.
\end{description}

\noi Recall that for any locally compact Hausdorff space $X$, a homeomorphism $i : X \ra X$ is called a topological involution if $i\circ i(x) = x$ for each $x\in X$. For a semihypergroup $K$, a topological involution $i : K \ra K$ given by $i(x):= \tilde{x}$ is called a \textit{hypergroup involution} if for any $\mu, \nu \in M(K)$ we have that 
$(\mu * \nu)^{\widetilde{}} = \tilde{\nu} * \tilde{\mu}$, where for any measure $\omega \in M(K)$ we have that $\tilde{\omega}(B) := \omega(\tilde{B})= \omega(i(B))$ for any Borel measurable subset $B$ of $K$.


As expected, a hypergroup involution on a semihypergroup is analogous to the inverse function on a semigroup. Hence a semihypergroup with an identity and an involution of the following characteristic is a hypergroup.

\begin{definition}\textbf{(Hypergroup)} A pair $(H, *)$ is called a (topological)  hypergroup if it is a semihypergroup and satisfies the following conditions :

\begin{description}
\item[(A5)] There exists an element $e \in H$ such that $p_x * p_e = p_e * p_x = p_x$ for any $x\in H$.
\item[(A6)] There exists a hypergroup involution $x\mapsto \tilde{x}$ on $H$ such that $e \in \mbox{supp} (p_x * p_y)$ if and only if $x=\tilde{y}$.
\end{description}
\end{definition}

\noi The element $e$ in the above definition is called the \textit{identity} of $H$. Note that the identity and the involution are necessarily unique \cite{JE}.

\vspace{0.03in}

\begin{remark}
Given a Hausdorff topological space $K$, in order to define a continuous bilinear mapping $* : M(K) \times M(K) \rightarrow M(K)$, it suffices to only define the measures $(p_x*p_y)$ for each $x, y \in K$. This is true since we can then extend `$*$' linearly to $M_F^+(K)$. As $M_F^+(K)$ is dense in $M^+(K)$ \cite{JE}, we can further extend `$*$' to $M^+(K)$ and hence to the whole of $M(K)$ using linearity.
\end{remark}





\begin{remark} \label{remex}
As mentioned in the introduction above, important examples of semihypergroups which fail to be hypergroups, include coset spaces and certain orbit spaces with non-automorphic actions arising from locally compact groups (see \cite[Section 3]{CB1}, \cite{BH, JE} for details on  such natural structures as well as why most of these structures fail to retain a  hypergroup structure).  In particular, coset spaces such as $GL_n(\mathbb{R})/O_n(\mathbb{R}), GL_n(\mathbb{C})/SU_n(\mathbb{C}),\\ S_4/D_8$ or in general for any compact subgroup $H$ of a locally compact group $G$, the left coset space $K=G/H$ becomes a semihypergroup with the convolution given by the natural averaging technique:

$$p_{_{xH}} * p_{_{yH}} = \int_H p_{_{(xty)H}} \ d\mu(t) ,$$
where $\mu$ is the normalized Haar measure of $H$. More generally, given a continuous affine action $\pi$ of any compact group $H$ on a locally compact group $G$, the associated orbit space $K= G^H := \{x^H : x \in G\} $, where $x^H$ denotes the orbit of $x$, naturally becomes a  semihypergroup with the convolution given by the following standard  averaging technique:
$$ p_{_{x^H}} * p_{_{y^H}} := \int_H \int_H p_{_{(\pi(s, x)\pi(t, y))^H} }\ d\mu(s) \ d\mu(t) ,$$ where $\mu$ is the normalized Haar measure of $H$.
\end{remark}

For the rest of this article, unless otherwise specified, $K$ will denote a semitopological semihypergroup. A closed subset $H$ of $K$ is called a \textit{sub-semihypergroup} if it is closed under convolution, i.e, if $H*H\subseteq H$. Similarly, if $K$ is a hypergroup, then a closed subset $Q$ of $K$ is called a \textit{sub-hypergroup} if it is closed under convolution and hypergroup involution, i.e, if $Q*Q\subseteq Q$ and $\widetilde{Q}=Q$.

For any bounded measurable function $f$ on a (semitopological) semihypergroup $K$ and each $x, y \in K$ the left translate of $f$ by $x$ (resp. the right translate of $f$ by $y$), denoted as $L_xf$ (resp. $R_yf$) is defined as the following.
$$ L_xf(y) = R_yf(x) = f(x*y) = \int_K f \ d(p_x*p_y)\ . $$
The right centre of $K$, denoted as $Z_r(K)$, is defined as the following:
$$Z_r(K):= \{x\in K: \mbox{supp}(p_x* p_y) \mbox{ is singleton for each } y\in K  \}.$$
We can similarly define the left centre of a semihypergroup (see \cite{CB3, CB4, RO} for examples and detailed discussions on centres of semihypergroups as well as hypergroups). Given an element $x\in K$ and a Borel set $E\subseteq K$, we introduce the following two notions, adhereing to the conventional notions of `left inverse sets' in the theory of topological semigroups.
\begin{eqnarray*}
\tilde{x}E &:=& \{y\in K : (p_x * p_y)(E)>0 \},\\
x^{-1}E &:=& \{y\in K : (p_x * p_y)(E)=1 \}.
\end{eqnarray*}

Note that both these sets are Borel subsets of $K$, since $\tilde{x}E = \mbox{supp}(L_x\chi_{_E})$ and translates of Borel functions on a semihypergroup are Borel measurable as well \cite{JE}. Furthermore, $x^{-1}E= \tau^{-1}(\mathfrak{C}(E))$ where $\tau:K\ra \mathfrak{C}(E)$ is the continuous map \cite{JE, MT} given by $\tau(z):= \{x\}*\{z\}$. We follow the same convention and define `right inverse sets' on a semihypergroup $K$ in the following manner:
\begin{eqnarray*}
	E\tilde{x} &:=& \{y\in K : (p_y*p_x)(E)>0 \},\\
	Ex^{-1} &:=& \{y\in K : (p_y*p_x)(E)=1 \}.
\end{eqnarray*}

\begin{remark} \label{inrem}
	We immediately see that if $x\in Z_r(K)$ or $x\in E$ where $E$ is a right ideal, i.e, $E*K\subseteq E$, then we have that $\tilde{x}E= x^{-1}E$ (see \cite{CB1} for details on ideals of semihypergroups). In fact, if $K$ is a hypergroup, then we have that $\tilde{x}E\subseteq \{\tilde{x}\}*E$, since $(\{x\}*\{y\})\cap E \neq \O$ if and only if $y\in \{\tilde{x}\}*E$ \cite[Lemma 4.1B]{JE}. Moreover, if $E$ has non-empty interior, then we have that $\tilde{x}E= \{\tilde{x}\}*E$. Hence if $K$ is a hypergroup and $Q\subseteq K$ is any sub-hypergroup, then $\tilde{x}Q= x^{-1}Q=Q$ for any $x\in Q$, as $\{\tilde{x}\}*Q=Q$.
\end{remark}
\noi We say that a  subset $E$ of a semitopological semihypergroup $K$ is of \textit{Type L} (resp. of \textit{Type R}), if we have that $\tilde{x}E= x^{-1}E$ (resp. $E\tilde{x}= Ex^{-1}$) for each $x\in E$.

\section{Topological Left Amenability}
\label{main}

 In this section, we first introduce the concept of topological amenability in the context of a semihypergroup, and then proceed to discuss some hereditary properties of the same, as well as those regarding (classical) left amenability. 
 
 Whenever appropriate, we identify a bounded measurable function $f$ on $K$ with the function $\phi_f\in M(K)^*$ defined as $\phi_f(\mu)=\int_K f \ d\mu$. We know that $M(K)$ is an associative Banach algebra, and hence adhering to the Arens product structure (see \cite{AR, CB1, CB4} for details) on $M(K)^{**}$, for each $\mu\in P(K), \phi\in M(K)^*$,  we define the left translate of $\phi$ by $\mu$, denoted as $L_\mu\phi\in M(K)^*$, as $$L_\mu\phi(\nu) :=\phi(\mu*\nu)$$ for each $\nu\in M(K)$.  Note that no confusion in notations should arise here, when compared to the left translation $L_xf$ for a function $f$ on $K$, as it is  understood that for each $y\in K$, we have that 
  $$L_xf(y)= \phi_f(p_x*p_y) = L_{p_{_x}} \phi_f (p_y),$$ 
 \noi whenever all the quantities exist. In fact, it immediately follows \cite[pp 11]{JE} that for each $x\in K$ and  bounded measurable function $f$ on $K$, we have that $\phi_{L_xf}= L_{p_{_x}}\phi_f$, since for each $\nu\in M(K)$ we have that $$ \phi_{L_xf}(\nu) = \int_K\int_K f(z) \ d(p_{_x}*p_y)(z) \ d\nu(y) = \int_K f(z)\ d(p_x*\nu)(z) = L_{p_{_x}}\phi_f(\nu).$$
 
 We define the right translate $R_\mu\phi\in M(K)^*$ of $\phi\in M(K)^*$ by $\mu\in M(K)$ in a similar manner, as $$R_\mu\phi(\nu) := \phi(\nu*\mu),$$ for each $\nu\in M(K)$.  Recall that a  positive  linear functional  $m$ on $M(K)^*$  with norm $1$ is called a \textit{mean}, i.e, we have that $m\in M(K)^{**}$ such that $||m|| = 1$ and $m(\phi)\geq 0$ whenever $\phi\geq 0$ for any $\phi\in M(K)^*$.  We denote the set of all means on $M(K)^*$ as $\mathcal{M}(K)$. We know \cite{MI} that the following three formulations of a mean $m\in M^{**}(K)$ are equivalent:
 \begin{enumerate}
 	\item $m$ is positive and $||m||=1$.
 	\item $||m||=m(\mathbb{1})=1$,  where $\mathbb{1}$ denotes the  function $\chi_K \in M(K)^*$ given as $\mu\mapsto \mu(K)$.
 	\item $\inf\{\phi(\nu): \nu\in P(K)\} \leq m(\phi) \leq \sup\{\phi(\nu): \nu\in P(K)\}$ for each $\phi\in M(K)^*$.
 \end{enumerate}
 
 We say that a mean $m\in \mathcal{M}(K)$  is a  \textit{topological left-invariant mean} (TLIM) if $m(L_\nu \phi) = m(\phi)$ for any $\nu\in P(K)$, $\phi\in M(K)^*$.  Similarly, we say that a mean $m\in \mathcal{M}(K)$  is a  \textit{left-invariant mean} (LIM) if $m(L_{p_{_x}} \phi) = m(\phi)$ for any $x\in K$, $\phi\in M(K)^*$. A semitopological semihypergroup is called topological left amenable (resp. left amenable), if it admits a topological left-invariant mean (resp. left-invariant mean). 
 
 \begin{remark}
In some previous works \cite{CB1, CB4} we  defined the left amenability of a semitopological semihypergroup $K$ in terms of the existence of a left invariant mean on the function space $C(K)$. The notion of topological  amenability used in the present article in terms of means on $M(K)^*$ is a more general formulation of amenability, and readily implies amenability in terms of means on $C(K)$ via the natural inclusion $C(K)\subseteq M(K)^*$.
 \end{remark}
 
The following theorem offers several characterizations of topological amenability for a semihypergroup in terms of certain stationary, ergodic and Banach algebraic conditions. Certain equivalences for the category of locally compact semigroups, were proved by Wong in \cite{WO1}. We make use of similar techniques in certain parts of the proof.  Before stating the theorem, we need a quick detour through the basics of Banach algebra amenability and F-algebras \cite{LA0, JP}.

For a Banach algebra $A$, a Banach space $X$ is called a left Banach $A$-module if there is a bounded  bi-linear map $ (a, x) \mapsto a.x : A\times X \ra X$ such that $a_1.(a_2.x) = (a_1.a_2).x$ for each $a_1, a_2\in A, x\in X$. A right Banach $A$-module is defined likewise, along with an associative bounded bilinear map $ (x, a) \mapsto x.a : X\times A \ra X$. A Banach $A$-bimodule is a Banach space $X$ which is both left and right Banach $A$-module such that $$a_1.(x. a_2) = (a_1. x). a_2$$ for each $a_1, a_2\in A, x\in X$. Recall \cite{JP} that for every Banach $A$-bimodule $X$, we have that $X^*$ is naturally a Banach $A$-bimodule as well, via the following operations for each $a\in A, \phi\in X^*, x\in X$.
$$ (a.\phi) (x) := \phi(x.a), \ \ (\phi. a) (x) := \phi(a.x).$$ 
Given a Banach $A$-bimodule $X$, a derivation from $A$ into $X$ is a linear map $D:A\ra X$ such that $$D(a.b) = D(a).b + a. D(b)$$ for each $a, b\in A$. We immediately see that for each $x\in X$, the map $D_{x} (a):= a.x - x.a$ is a bounded derivation from $A$ into $X$. A derivation $D$ from $A$ into $X$ is called an \textit{inner derivation} if $D=D_x$ for some $x\in X$. We know that a  Banach algebra $A$ is called \textit{amenable} if for any Banach $A$-bimodule $X$ we have  that every bounded derivation of $A$ into $X^*$ is inner. We know that \cite{JO} a locally compact group $G$ is amenable if and only if $L^1(G)$ is amenable as a Banach algebra, although the implication fails to be true for the broader categories of locally compact semigroups and hypergroups  (see \cite{DA, JO, SK} for details on this subject).

 Recall that as introduced by  Lau in  \cite{LA0}, a Banach algebra $A$ is called an  F-algebra (referred to as  \textit{Lau algebra} by  Pier in \cite{JP}) if $A$ is the (unique) predual of a W$^*$-algebra $M$ such that the identity $e_M$ of $M$ is a multiplicative linear functional on $A$. We denote the predual of $M$ as $M_*=A$.
 
For any (semitopological) semihypergroup $K$, we know that $C(K)^*=M(K)$. Since $C(K)$ is a commutative C$^*$-algebra with the point-wise product, we immediately see that $M(K)^*$ is a W$^*$-algebra. In addition, since $(\mu*\nu)(K)=\mu(K)\nu(K)$ for any $\mu, \nu\in M(K)$, we have that $M(K)$ is indeed an F-algebra. An F-algebra $A$ with $A=M_*$ (i.e,  $A^*=M$) is called left amenable if for each Banach $A$-bimodule $X$ with the left module action given by $$a . x = 	\langle a, e_M\rangle x \ \ \mbox{ for each } \ \ a\in A, x\in X,$$ we have that every bounded derivation from $A$ into $X^*$ is inner.
 
 Finally, recall \cite{MI, WO1}  that the study of (topological) stationary properties is of pivotal importance in the study of amenability properties of locally compact groups and semigroups in general. We introduce the concept in a similar manner for the category of (semitopological) semihypergroups. We say that a (semitopological) semihypergroup $K$ is \textit{topological right stationary} if there exists a net $\{\nu_\alpha\}\subset P(K)$ such that for each $\phi\in M(K)^*$ there exists some scalar  $c_0\in \mathbb{C}$ such that $R_{\nu_\alpha} \phi \longrightarrow^{\hspace{-0.18in} w^*} \ c_0\mathbb{1}$ in  $M(K)^*$.
 

 \begin{theorem} \label{mainthm0}
 	Let $K$ be a semitopological semihypergroup. Then the following properties  are equivalent.
 	\begin{enumerate}
 		\item $K$ admits a TLIM.
 		\item $\exists$ a net $\{\nu_\alpha\}$ in $P(K)$ such that for each $\nu \in P(K)$, the net $\{(\nu * \nu_\alpha - \nu_\alpha)\}$ converges to $0$ in norm topology on $M(K)$.
 		\item For each $\mu\in M(K)$, we have $|\mu(K)|=\inf \{||\mu*\nu|| : \nu\in P(K)\}$.
 		\item $K$ is topological right stationary.
 		\item $M(K)$ is left amenable as an F-algebra.
 	\end{enumerate}
 	\end{theorem}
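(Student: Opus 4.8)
The plan is to prove the cyclic chain $(1)\Rightarrow(2)\Rightarrow(3)\Rightarrow(4)\Rightarrow(1)$ and to obtain $(1)\Leftrightarrow(5)$ separately from Lau's theory of F-algebras \cite{LA0, JP}, adapting to the measure-algebra setting the semigroup techniques of Wong \cite{WO1}. Throughout I would rely on four structural facts coming from Definition \ref{shyper} and the preceding discussion: $M(K)$ is a Banach algebra, so $\|\mu*\nu\|\le\|\mu\|\,\|\nu\|$; one has $(\mu*\nu)(K)=\mu(K)\nu(K)$ and $|\omega(K)|\le\|\omega\|$ for every $\omega$; the set $P(K)$ is closed under convolution and consists of norm-one measures; and the mean set $\mathcal{M}(K)$ is exactly the weak*-closure of $P(K)$ in $M(K)^{**}$. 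Writing $N:=\{\mu\in M(K):\mu(K)=0\}$, a closed subspace, every $\sigma\in N$ is a finite complex-linear combination of differences $\alpha-\beta$ with $\alpha,\beta\in P(K)$, obtained by taking real and imaginary parts and Jordan decompositions (using $\sigma(K)=0$).

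For $(1)\Rightarrow(2)$ I would run the Day--Namioka convexification. Since a TLIM $m$ is a weak* limit of a net $\{\nu_\alpha\}\subset P(K)$, the identity $\langle\nu*\nu_\alpha-\nu_\alpha,\phi\rangle=\langle\nu_\alpha,L_\nu\phi-\phi\rangle\to m(L_\nu\phi)-m(\phi)=0$ shows $\nu*\nu_\alpha-\nu_\alpha\to0$ weakly for each $\nu\in P(K)$. To replace weak by norm convergence with a single net, I fix $\nu_1,\dots,\nu_n\in P(K)$, view $\{(\nu_1*\xi-\xi,\dots,\nu_n*\xi-\xi):\xi\in P(K)\}$ as a convex subset of the $\ell^1$-direct sum $\bigoplus_1^n M(K)$ whose weak closure contains $0$, and invoke Mazur's theorem to place $0$ in its norm closure; indexing over finite subsets of $P(K)$ and $\varepsilon>0$ then yields the required net. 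The implication $(2)\Rightarrow(3)$ is then direct: $\|\mu*\nu\|\ge|(\mu*\nu)(K)|=|\mu(K)|$ holds always, while for the reverse I fix $\rho_0\in P(K)$, write $\sigma:=\mu-\mu(K)\rho_0\in N$ as a combination of differences of probability measures, note $\|\sigma*\nu_\alpha\|\to0$ along the net of $(2)$, and conclude $\|\mu*\nu_\alpha\|\le\|\sigma*\nu_\alpha\|+|\mu(K)|\,\|\rho_0*\nu_\alpha\|\to|\mu(K)|$.

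I expect the decisive step to be $(3)\Rightarrow(4)$, where a one-measure infimum must be promoted to a single net annihilating all of $N$ simultaneously; here I would use an iteration that exploits submultiplicativity. Given $\sigma_1,\dots,\sigma_n\in N$ and $\varepsilon>0$, pick $\nu_1\in P(K)$ with $\|\sigma_1*\nu_1\|<\varepsilon$ via $(3)$; having chosen $\nu_{k-1}$, apply $(3)$ to $\sigma_k*\nu_{k-1}\in N$ to pick $\nu_k'$ with $\|\sigma_k*\nu_{k-1}*\nu_k'\|<\varepsilon$ and set $\nu_k:=\nu_{k-1}*\nu_k'\in P(K)$; the estimate $\|\sigma_j*\nu_k\|=\|(\sigma_j*\nu_{k-1})*\nu_k'\|\le\|\sigma_j*\nu_{k-1}\|$ preserves the previous bounds, so $\nu:=\nu_n$ gives $\|\sigma_i*\nu\|<\varepsilon$ for every $i$. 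Indexing over finite subsets of $N$ produces a net $\{\nu_\alpha\}\subset P(K)$ with $\|\sigma*\nu_\alpha\|\to0$ for each fixed $\sigma\in N$. Fixing $\rho_0\in P(K)$ and passing to a subnet along which $\phi(\rho_0*\nu_\alpha)$ converges for every $\phi$, I set $c_\phi:=\lim_\alpha\phi(\rho_0*\nu_\alpha)$; then for fixed $\mu$ the element $\sigma=\mu-\mu(K)\rho_0$ is a fixed member of $N$, so $\phi(\mu*\nu_\alpha)=\mu(K)\phi(\rho_0*\nu_\alpha)+\phi(\sigma*\nu_\alpha)\to c_\phi\mu(K)$, i.e. $R_{\nu_\alpha}\phi\to c_\phi\mathbb{1}$, which is $(4)$. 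For $(4)\Rightarrow(1)$ I would define $m(\phi):=c_\phi$, the scalar furnished by stationarity; it is linear and positive, and since $R_{\nu_\alpha}\mathbb{1}=\mathbb{1}$ one has $m(\mathbb{1})=1=\|m\|$, so $m$ is a mean; applying stationarity to $\rho*\mu$ in place of $\mu$ gives $c_{L_\rho\phi}=c_\phi$, hence $m(L_\rho\phi)=m(\phi)$ for every $\rho\in P(K)$, so $m$ is a TLIM.

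Finally, for $(1)\Leftrightarrow(5)$ I would specialize Lau's characterization of left amenable F-algebras to $A=M(K)$, with $M=M(K)^*$ and $e_M=\mathbb{1}$. Under this dictionary the prescribed left action $a.x=\langle a,\mathbb{1}\rangle x$ is exactly the module action appearing in the definition of F-algebra left amenability, and a left-invariant mean in Lau's sense coincides with a TLIM as defined here. The only subtlety is to confirm that invariance tested against all of $P(K)$ matches the abstract invariance, which follows since $P(K)$ is weak*-dense in $\mathcal{M}(K)$ and convolution is separately weak*-continuous on bounded sets; granting this identification, $(5)$ is Lau's theorem applied to $M(K)$.
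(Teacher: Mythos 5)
Your proposal is correct, but its logical architecture differs genuinely from the paper's in how conditions (3) and (4) are handled. The paper proves $(1)\Leftrightarrow(2)$ by the same Day-type convexity argument you sketch (it cites Wong and earlier work rather than writing it out), obtains $(1)\Leftrightarrow(5)$ from Lau's Theorem 4.1 exactly as you do, but then imports Lau's Theorem 4.6 to get $(5)\Leftrightarrow(4)\Leftrightarrow(2)$ wholesale, closing the remaining gap with two hand-made arguments: the submultiplicativity iteration for $(3)\Rightarrow(2)$, and a Hahn--Banach argument for $(4)\Rightarrow(3)$ (a norm-one functional $\psi_\mu$ with $|\psi_\mu(\mu*\nu)|\geq\zeta_\mu$ on $S_\mu=\{\mu*\nu:\nu\in P(K)\}$, whose right translates converge weak$^*$ to a multiple of $\mathbb{1}$ by stationarity, forcing $\zeta_\mu=|\mu(K)|$). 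You instead prove the full cycle $(1)\Rightarrow(2)\Rightarrow(3)\Rightarrow(4)\Rightarrow(1)$ concretely: your $(2)\Rightarrow(3)$, via decomposing the kernel $N=\{\mu:\mu(K)=0\}$ of $\mathbb{1}$ into complex combinations of differences of probability measures, does not appear in the paper; your $(3)\Rightarrow(4)$ reuses the same iteration trick the paper uses for $(3)\Rightarrow(2)$, but applied to arbitrary elements of $N$, followed by a Banach--Alaoglu subnet extraction to produce the stationary limits; and your $(4)\Rightarrow(1)$ builds the mean $m(\phi):=c_\phi$ directly from those limits instead of routing through Lau's abstract machinery. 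What your route buys is self-containment --- only Lau's Theorem 4.1 is cited, and the stationarity condition is demystified by elementary measure-algebra estimates; what the paper's route buys is brevity (Lau's Theorem 4.6 absorbs your two hardest implications) and a direct implication $(4)\Rightarrow(3)$, which your scheme recovers only by traversing the whole cycle. One small simplification you can make: the ``subtlety'' you flag in $(1)\Leftrightarrow(5)$ is vacuous, since under the identification $A=M(K)$ one has $P_1(A^{**})\cap A=P(K)$ exactly, so the invariance condition in Lau's theorem is verbatim the definition of a TLIM and no weak$^*$-density argument is needed.
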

\begin{proof}
	$(2)\Rightarrow (1)$  is trivial by setting $m\in \mathcal{M}(K)$ to be the weak$^*$-limit of $\mu_\alpha$'s.
	
\noi $(1)\Rightarrow (2)$ follows in a similar manner as in the case for locally compact semigroups \cite{WO2} (a similar construction  was given in the setting of semihypergroups in \cite[Section 3]{CB4}).

\noi $(3)\Rightarrow (2)$: 	We first fix some $\nu_0\in P(K)$, and construct a net $\{\nu_\alpha\}_{\alpha\in J}\subseteq P(K)$ that satisfies the condition in $(2)$ for each $\nu\in P(K)$. For any $n\in \mathbb{N}$, here $P(K)^n$ denotes the product space $\{(\mu_1, \mu_2, \ldots, \mu_n): \mu_k\in P(K) \mbox{ for } 1\leq k \leq n\}$. We define the index set $J$ as the following. 
$$J:= \{\alpha: \alpha=(n_\alpha, P_\alpha, \varepsilon_\alpha) \mbox{ where } n_\alpha\in \mathbb{N}, P_\alpha = (\mu_1^\alpha, \mu_2^\alpha, \ldots, \mu_{n_\alpha}^\alpha) \in P(K)^{n_\alpha}, 0< \varepsilon_\alpha <1\}.$$ 
For any $\alpha, \beta\in J$ we say that $\alpha \geq \beta$ if $n_\alpha\geq n_\beta$, $P_\alpha \supseteq P_\beta$ and $\varepsilon_\alpha \leq \varepsilon_\beta$. It is easy to see that $(J, \leq)$ is a directed set since given $\alpha, \beta\in J$, we will always get an upper bound $\gamma=(n_\gamma, P_\gamma, \varepsilon_\gamma)\in J$ where $P_\gamma= P_\alpha\cup P_\beta$, $n_\gamma = |P_\gamma|$ and $\varepsilon_\gamma = \min(\varepsilon_\alpha, \varepsilon_\beta)$.

\noi Now pick any $\alpha\in J$. Since $P(K)*P(K)\subseteq P(K)$, for each $k= 1, 2, \ldots, n_\alpha$ we have that $(\mu_k^\alpha*\mu_0-\mu_0)(K)=0$, and hence for any $\omega\in P(K)$ we have that $$\big{(}(\mu_k^\alpha*\mu_0-\mu_0)*\omega\big{)}(K)=0.$$
 In particular, our assumption in $(3)$ implies that $$\inf \{||(\mu_1^\alpha*\mu_0-\mu_0)*\nu|| : \nu\in P(K)\}=0.$$ Pick $\nu_1^\alpha\in P(K)$ such that $|| (\mu_1^\alpha*\mu_0-\mu_0)*\nu_1^\alpha||\leq \ep_\alpha$. Next, since $\big{(}(\mu_2^\alpha*\mu_0-\mu_0)*\nu_1^\alpha\big{)}(K)=0$ as well, using the condition in $(3)$ again we get some $\nu_2^\alpha\in P(K)$ such that $|| (\mu_2^\alpha*\mu_0-\mu_0)*\nu_1^\alpha*\nu_2^\alpha||\leq \ep_\alpha$. Proceeding this way inductively, for each $1\leq k \leq n_\alpha$ we get some $\nu_k^\alpha\in P(K)$ such that
 $$ ||(\mu_k^\alpha *\mu_0-\mu_0)*\nu_1^\alpha*\nu_2^\alpha \cdots *\nu_k^\alpha||\leq \ep_\alpha.$$
 
\noi  For each $\alpha\in J$ we now set $\nu_\alpha:= \mu_0*\nu_1^\alpha*\nu_2^\alpha\cdots *\nu_{n_\alpha}^\alpha \in P(K)$. Given any $\nu\in P(K)$ and $\ep>0$, pick $\alpha_0\in J$ such that $\nu\in P_{\alpha_0}$  and  $\ep_{\alpha_0}<\ep$.   Then for each $\alpha\geq \alpha_0$ there exists some $j_\alpha\in \{1, 2, \ldots, n_{\alpha}\}$ such that $\nu = \mu_{j_\alpha}^\alpha \in P_\alpha \supseteq P_{\alpha_0}$. Hence we have that 
\begin{eqnarray*}
||\nu*\nu_\alpha -\nu_\alpha|| &=& || (\nu * \mu_0-\mu_0)*\nu_1^\alpha*\nu_2^\alpha\cdots *\nu_{n_\alpha}^\alpha||\\
&\leq & ||(\nu * \mu_0-\mu_0)*\nu_1^\alpha*\nu_2^\alpha\cdots * \nu_{j_\alpha}^\alpha|| \: ||\nu_{j_\alpha+1}^\alpha*\cdots * \nu_{n_\alpha}^\alpha||\\
&=& ||(\mu_{j_\alpha}^\alpha * \mu_0-\mu_0)*\nu_1^\alpha*\nu_2^\alpha\cdots * \nu_{j_\alpha}^\alpha|| \ \leq \ \ep_\alpha \leq \ep_{\alpha_0}<\ep,
\end{eqnarray*}
as required.	

$(5)\Leftrightarrow (1)$: It was shown in \cite[Theorem 4.1]{LA0} that an F-algebra $A$ is left amenable if and only if there exists an element $m\in P_1(A^{**}):= \{m\in A^{**}: m\geq 0, \ m(e_M) =1\}$ such that $m(x.\phi)=m(x)$ for each $\phi\in P_1(A^{**}) \cap A$, $x\in A^*$, where $x.\phi\in A^*$ is defined as $$(x.\phi)(\gamma):= x(\phi.\gamma),$$ for each $\gamma\in A$. In particular, letting $(A, \cdot)= (M(K), *)$ we immediately see that $P_1(A^{**})\cap A= P(K)$ and hence for each $\nu\in P(K)$, $\phi\in M(K)^*$  we have that $(\phi.\nu)=L_\nu\phi$, since  $$(\phi.\nu)(\mu) = \phi(\nu*\mu) = L_\nu\phi(\mu)$$ for each $\mu\in M(K)$. Thus we readily have that $M(K)$ is a left amenable  F-algebra if and only if $K$ admits a TLIM.

$(5)\Leftrightarrow (4) \Leftrightarrow (2)$: For any F-algebra $A$, for each $\phi\in P_1(A^{**}) \cap A$ and $x\in A^*$, the element $\phi . x \in A^*$ is defined as $$(\phi . x)(\gamma):= x(\gamma . \phi),$$ for each $\gamma\in A$. Hence again, setting $(A, \cdot)= (M(K), *)$ in \cite[Theorem 4.6]{LA0} and observing that $(\nu.\phi)=R_\nu\phi$ for each $\nu\in P(K)= P_1(A^{**})\cap A$, $\phi\in M(K)^*$, we have the desired implications.

$(4)\Rightarrow (3)$: Suppose that $K$ is topological right stationary. Pick any $\mu\in M(K)$ and set $\zeta_\mu:= \inf \{||\mu*\nu|| : \nu\in P(K)\}$. We first observe that $|\mu(K)| \leq \zeta_\mu$ since for each $\nu\in P(K)$ we have that $$ |\mu(K)| = |\mu(K)\nu(K)| = |(\mu*\nu)(K)| \leq |(\mu*\nu)|(K) = ||\mu*\nu||.$$
Now consider the convex set $S_\mu := \{\mu*\nu: \nu\in P(K)\}$. Since $S_\mu$ is bounded below by $\zeta_\mu$, by an application of the Hahn-Banach extension theorem, there exists  $\psi_\mu\in M(K)^*$ such that $||\psi_\mu||=1$ and $|\psi_\mu(\mu*\nu)|\geq \zeta_\mu$ for each $\nu\in P(K)$. For each $\nu, \omega \in P(K)$ we have that 
\begin{eqnarray*}
|R_\nu\psi_\mu (\mu*\omega)| &=& |\psi_\mu(\mu*\omega*\nu)|\\
&=& |\psi_\mu(\mu*(\omega*\nu))| \geq \zeta_\mu,	
\end{eqnarray*}
where the inequality follows since $\omega*\nu\in P(K)$. But $K$ is topological right stationary, and hence there exists a scalar $c_0\in \mathbb{C}$ such that $c_0\mathbb{1}$ lies in the weak$^*$-closure of the convex set $\{R_\nu\psi_\mu : \nu\in P(K)\} \subseteq M(K)^*$. In particular, we have that $|c_0\mathbb{1}(\mu*\omega)|\geq \zeta_\mu$ for each $\omega \in P(K)$. But $c_0\mathbb{1}\equiv c_0\mu(K)= c_0\mathbb{1}(\mu)$ on $S_\mu$. Hence in particular, we have the following set of inequalities.
\begin{eqnarray*}
\zeta_\mu \ \leq \	|c_0\mu(K)| &=& \inf \{|c_0\mathbb{1}(\mu*\omega)| : \omega\in P(K)\}\\
&\leq & |c_0| \inf\{||\mu*\omega||: \omega\in P(K)\}\\
&=& |c_0| \zeta_\mu.
	\end{eqnarray*}
Since $\zeta_\mu\neq 0$ we must have that $|c_0|=1$ and $$\zeta_\mu = |c_0|\: |\mu(K)| = |\mu(K)|,$$ as required.

	\end{proof}

Next, we proceed towards investigating the relation between the topological amenability of a semihypergroup and its sub-semihypergroups, in terms of the sub-semihypergroup invariance of a mean on $M(K)^*$. Let $H\subseteq K$ be a sub-semihypergroup of $K$. A mean $m\in \mathcal{M}(K)$ is called \textit{topological left $H$-invariant} if for any $\nu\in P(K)$ such that $\mbox{supp}(\nu)\subseteq H$ and  $\phi\in M(K)^*$ we have that $m(L_\nu\phi)=m(\phi)$. Similarly, a mean $m\in \mathcal{M}(K)$ is called \textit{left $H$-invariant} if for any $y\in H$  and  $\phi\in M(K)^*$ we have that $m(L_{p_{_y}}\phi)=m(\phi)$.

For any Borel set $E\subseteq K$ and $\mu\in M(K)$, we denote the restriction of $\mu$ on $E$ as $\mu_{_E}$, i.e, we have that $\mu_{_E}\in M(E)$ where $\mu_{_E}(B) = \mu(B)$ for any Borel set $B\subseteq E$. Similarly, the restriction $h|_{_E}$ of a Borel function $h\in \mathcal{B}(K)$ on $E$ is denoted by $h_{_E}$. On the other hand, given a measure $\mu\in M(E)$, we denote the natural extension of $\mu$ to $M(K)$ as $\mu^e$, i.e, we have that $\mu^e\in M(K)$ where $\mu^e(B) := \mu(B\cap E)$ for any Borel set $B\subseteq K$. Similarly, the extension of a Borel function $h\in \mathcal{B}(E)$ to $K$ is denoted by $h^e$, and is defined as $h^e(x)=h(x)$ if $x\in E$ and $h^e(x)=0$ for any $x\in E^c$. 

\begin{remark}\label{remext}
It follows readily from the definitions and notations above, that for any Borel set $E\subseteq K$ and a   measure $\omega\in M(K)$ such that $\su(\omega)\subseteq E$, we have that  $\omega_H^e=\omega$. Moreover, we have  that $M(E) = \{\mu_{_E} : \mu\in M(K)\}$ and that $$\int_E h^e \ d\mu = \int_K h^e \ d\mu = \int_E h \ d\mu_{_E} ,$$ for any $h\in \mathcal{B}(E)$, $\mu\in M(K)$, whenever the above quantities exist.
\end{remark}

The following theorem provides insight into how the convolution operation of the measure algebra $M(K)$ is distributed among the restrictions of measures to $M(H)$ for a sub-semihypergroup $H\subseteq K$.

\begin{theorem} \label{convr}
	Let $K$ be a semitopological semihypergroup and $H\subseteq K$ is a sub-semihypergroup. Then the following statements hold true for any $\mu, \nu\in M(K)$.
	\begin{enumerate}
		\item $||(\mu*\nu)_{_H} - (\mu_{_H} *\nu_{_H}) ||\leq \int_K |\mu|\big{(} (H\tilde{y}\cap H^c) \ d|\nu|(y)	 + 	\int_K|\nu|(\tilde{x}H\cap H^c)  \ d|\mu|(x)$.
		\item If $\mbox{supp}(\mu) \subseteq H$, then we have that $$||(\mu*\nu)_{_H} - (\mu_{_H} *\nu_{_H}) || \leq \int_H |\nu| \: (\tilde{y}H\cap H^c) \ d\mu(y) . $$
		\item If $\mbox{supp}(\mu), \mbox{supp}(\nu)\subseteq H$, then $(\mu*\nu)_{_H} = (\mu_{_H} *\nu_{_H})$.
		\item If $K$ is a hypergroup and $H$ a sub-hypergroup of $K$, then $(\mu*\nu)_{_H} = (\mu_{_H} *\nu_{_H})$ whenever $\su(\mu)\subseteq H$.
	\end{enumerate}
\end{theorem}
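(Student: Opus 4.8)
The plan is to reduce all four parts to a single total-variation estimate derived from the integral representation of the convolution. First I would record that for any Borel set $B\subseteq K$,
$$(\mu*\nu)(B)=\int_K\int_K (p_x*p_y)(B)\,d\mu(x)\,d\nu(y),$$
which is how $*$ on $M(K)$ is built from the point masses by linear and continuous extension. Since $H*H\subseteq H$, for $x,y\in H$ the probability measure $p_x*p_y$ is supported in $H$; identifying $\mu_{_H},\nu_{_H}$ with their extensions in $M(K)$, this shows that $\mu_{_H}*\nu_{_H}$ is again supported in $H$ and that $(\mu_{_H}*\nu_{_H})(B)=\int_H\int_H(p_x*p_y)(B)\,d\mu(x)\,d\nu(y)$ for Borel $B\subseteq H$.

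Next I would put $\rho:=(\mu*\nu)_{_H}-(\mu_{_H}*\nu_{_H})$ and subtract the two integral formulas on Borel sets $B\subseteq H$. The domains differ exactly by $(K\times K)\setminus(H\times H)$, which I split as the disjoint union $(H^c\times K)\sqcup(H\times H^c)$, writing $\rho=\rho_1+\rho_2$ accordingly. The main technical step, and the one I expect to require the most care, is the bound: for any Borel $A\subseteq K\times K$, the measure $B\mapsto\int_A(p_x*p_y)(B)\,d\mu(x)\,d\nu(y)$ on $H$ has total variation at most $\int_A(p_x*p_y)(H)\,d|\mu|(x)\,d|\nu|(y)$. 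I would prove this by choosing a finite Borel partition $\{B_i\}$ of $H$, estimating $|\rho_1(B_i)|\le\int_A(p_x*p_y)(B_i)\,d|\mu|(x)\,d|\nu|(y)$ through $|\int f\,d(\mu\times\nu)|\le\int|f|\,d(|\mu|\times|\nu|)$, summing, using $\sum_i(p_x*p_y)(B_i)=(p_x*p_y)(H)$ together with Tonelli, and passing to the supremum over partitions. This uses the measurability of $(x,y)\mapsto(p_x*p_y)(H)$, which I would obtain from the continuity of the support map in \textbf{(A4$'$)} together with Jewett's measurability results.

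Applying this to $\rho_1$ and integrating in $x$ first, for fixed $y$ the integrand is positive only for $x\in H\tilde{y}$ and is bounded by $1$, so $\int_{H^c}(p_x*p_y)(H)\,d|\mu|(x)\le|\mu|(H\tilde{y}\cap H^c)$; integrating in $y$ yields the first term of (1). Handling $\rho_2$ symmetrically, and recognizing $\tilde{x}H=\{y:(p_x*p_y)(H)>0\}$, gives the second term, so $\|\rho\|\le\|\rho_1\|+\|\rho_2\|$ proves (1). For (2), the hypothesis $\su(\mu)\subseteq H$ gives $|\mu|(H^c)=0$, killing the first term and leaving only the second, integrated over $\su(\mu)\subseteq H$. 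Then (3) is immediate from (2), since $\su(\nu)\subseteq H$ forces $|\nu|(\tilde{y}H\cap H^c)\le|\nu|(H^c)=0$, so the bound vanishes and $\rho=0$. Finally (4) follows from (2) and Remark \ref{inrem}: for a hypergroup $K$ and sub-hypergroup $H$ one has $\tilde{y}H=H$ for every $y\in H$, hence $\tilde{y}H\cap H^c=\O$ and the surviving term is again zero.
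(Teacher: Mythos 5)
Your proposal is correct and follows essentially the same route as the paper: both compare the double-integral representations of $(\mu*\nu)_{_H}$ and $\mu_{_H}*\nu_{_H}$, localize the discrepancy to $(K\times K)\setminus(H\times H)$ intersected with $\{(x,y):(p_x*p_y)(H)>0\}$, identify those slices with the inverse sets $H\tilde{y}$ and $\tilde{x}H$, and obtain (2)--(4) by annihilating the appropriate terms, with (4) via Remark \ref{inrem}. The only difference is packaging: the paper bounds $\big|\int_H f\,d\big((\mu*\nu)_{_H}-\mu_{_H}*\nu_{_H}\big)\big|$ for $f\in C(H)$ and passes to the total variation norm by duality, whereas you estimate the total variation directly through finite Borel partitions of $H$ — two standard, interchangeable ways of carrying out the same estimate.
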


\begin{proof}
	
	
	First to prove $(1)$, pick and fix some  $f\in C(H)$. Although the functions $(R_yf)^e$ and $R_yf^e$ do not coincide everywhere on $K$, for any $x, y\in H$, we immediately have that $$(R_yf)^e(x) = R_yf(x) = \int_H f \ d(p_x*p_y) = \int_K f^e\ d(p_x*p_y) = R_yf^e(x). $$
Hence for each $y\in H$ we have the following.
\begin{eqnarray*}
\int_H f \ d(\mh*\nh) &=& \int_H\int_H f(x*y) \ d\mh(x) \ d\nh(y)\\
&=& \int_H\int_H R_yf(x) \ d\mh(x)\ d\nh(y)\\
&=&\int_H\int_H (R_yf)^e (x) \ d\mu(x)\ d\nh(y)\\ 	
&=& \int_H\int_H R_yf^e (x) \ d\mu(x)\ d\nh(y)\\ 	
&=& \int_H \int_H f^e(x*y) \ d\mu(x)\ d\nu(y).
\end{eqnarray*}	
Now consider the set  $R_H:= \{(x, y)\in (H^c\times K) \cup (K\times H^c): (p_x*p_y)(H)>0\}\subseteq K\times K$. Then for each $f\in C(H)$ we have the following inequality.

\begin{eqnarray*}
	\Big{|}\int_H f  d(\mu*\nu)_\sh - \int_H f  d(\mh*\nh)	\Big{|} &=& 	\Big{|}\int_K f^e  d(\mu*\nu) - \int_H \int_H f^e(x*y) d\mu(x) d\nu(y)	\Big{|}\\
	&=&	\Big{|}\int_K\int_K  f^e(x*y)   d\mu(x) d\nu(y) - \int_H \int_H f^e(x*y)  d\mu(x) d\nu(y)	\Big{|}\\
	&= & 	\Big{|}\iint_{{R_H}} f^e(x*y)   d\mu(x) d\nu(y)	\Big{|}\\
	&\leq & \iint_{{R_H}} \int_K \big{|}f^e(z)\big{|} d(p_x*p_y)(z)   d|\mu|(x) d|\nu|(y)\\
	&\leq & ||f||_\infty \iint_{R_H} 	d|\mu|(x) d|\nu|(y).
\end{eqnarray*}	
For any $y\in K$ if $(x, y) \in 	(H^c\times K)\cap {R_H}$, then following the notations introduced in previous section, we must have that $x\in H\tilde{y}$. Similarly, for any $x\in K$ if $(x, y) \in (K\times H^c)\cap {R_H}$, then we have that $y\in \tilde{x}H$ as well. Hence finally we have that

\begin{eqnarray*}
&&	\hspace{-0.25in} \Big{|}\int_H f  d(\mu*\nu)_\sh - \int_H f  d(\mh*\nh)	\Big{|} \\
&\leq & ||f||_\infty \Big{(}	\iint_{(H^c\times K)\cap {R_H}}  d|\mu|(x) d|\nu|(y)	 + 	\iint_{(K\times H^c)\cap {R_H}}  d|\nu|(y) d|\mu|(x) \Big{)}\\
	&= &  ||f||_\infty \Big{(}	\int_K\int_{(H\tilde{y}\cap H^c)}  d|\mu|(x) d|\nu|(y)	 + 	\int_K\int_{(\tilde{x}H\cap H^c)}   d|\nu|(y) d|\mu|(x) \Big{)}\\
	&= & ||f||_\infty \Big{(}	\int_K |\mu|\big{(} (H\tilde{y}\cap H^c) d|\nu|(y)	 + 	\int_K|\nu|(\tilde{x}H\cap H^c)   d|\mu|(x) \Big{)},\\
\end{eqnarray*}
as required.

The statements $(2)$ and $(3)$ follow immediately from $(1)$. To prove $(4)$, notice that it follows immediately from Remark \ref{inrem} that for each $y\in H$, we have that $\tilde{y}H = \{\tilde{y}\}*H= H$. Hence  $\tilde{y}H\cap H^c = \O$ for each  $y\in \su(\mu)$, and the conclusion follows from $(2)$.
	\end{proof}

We now prove the following equivalence between the existence of a TLIM on a semihypergroup and one of its sub-semihypergroups. The first two equivalences were proved in the setting of  locally compact semigroups in \cite{WO}. We follow similar ideas in proving certain sections of the following theorem. In addition to the intrinsically different nature of the measure algebra constructions between semihypergroups and locally compact (semi)groups, one of the main challenges here is the fact that for a given sub-semihypergroup  $H$ of $K$ and $x\in K$, we do not necessarily have that $L_x \chi_{_H }= \chi_{_{\tilde{x}H}}$, unlike locally compact semigroups, and even hypergroups when $x\in H$.

\begin{theorem}\label{thm2}
	Let $K$ be a semitopological semihypergroup and $H\subseteq K$ is a sub-semihypergroup. Then the following properties are equivalent.
	\begin{enumerate}
		\item $H$ admits a TLIM.
		\item $\exists$ a topological left $H$-invariant mean  $m\in \mathcal{M}(K)$  such that $m(\chi_{_H}) =1$.
		\end{enumerate}
	Moreover, if $H$ is of Type L, then the above properties are equivalent to the following:
	\begin{enumerate}
		\setcounter{enumi}{2}
		\item $\exists$ a topological left $H$-invariant mean $m\in \mathcal{M}(K)$ such that $m(\chi_{_H}) >0$.
	\end{enumerate}
\end{theorem}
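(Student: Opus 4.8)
The plan is to transport means between $M(K)^*$ and $M(H)^*$ along the two adjoint maps attached to the extension and restriction of measures, using Theorem \ref{convr} to quantify the failure of convolution to commute with restriction to $H$, and using that $\chi_{_H}$ is a projection in the commutative W$^*$-algebra $M(K)^*$ whose multiplication on the predual restricts a measure to $H$. Write $\Theta:M(K)^*\ra M(H)^*$, $\Theta(\phi)(\sigma):=\phi(\sigma^e)$, for the adjoint of the extension map $\sigma\mapsto\sigma^e$, and $\Xi:M(H)^*\ra M(K)^*$, $\Xi(\psi)(\mu):=\psi(\mu_{_H})$, for the adjoint of the restriction map $\mu\mapsto\mu_{_H}$. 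Both are positive and contractive, and a direct computation using Remark \ref{remext} gives $\Theta(\mathbb{1})=\Theta(\chi_{_H})=\mathbb{1}_{_H}$ and $\Xi(\mathbb{1}_{_H})=\chi_{_H}$, where $\mathbb{1}_{_H}\in M(H)^*$ denotes $\sigma\mapsto\sigma(H)$.

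For $(1)\Rightarrow(2)$ I would take a TLIM $m_H\in\mathcal{M}(H)$ and set $m:=m_H\circ\Theta$. Positivity of $\Theta$ together with $\Theta(\mathbb{1})=\mathbb{1}_{_H}$ shows $m$ is a mean, and $m(\chi_{_H})=m_H(\mathbb{1}_{_H})=1$. For invariance, fix $\nu\in P(K)$ with $\su(\nu)\subseteq H$ and note $\nu_{_H}\in P(H)$. Since $\su(\nu*\sigma^e)\subseteq H$ (as $H*H\subseteq H$ and $H$ is closed), Theorem \ref{convr}(3) gives $\nu*\sigma^e=(\nu_{_H}*\sigma)^e$ for every $\sigma\in M(H)$, whence $\Theta(L_\nu\phi)=L_{\nu_{_H}}\Theta(\phi)$. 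Invariance of $m_H$ then yields $m(L_\nu\phi)=m_H(L_{\nu_{_H}}\Theta\phi)=m_H(\Theta\phi)=m(\phi)$.

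For $(2)\Rightarrow(1)$ I would reverse the roles and set $m_H:=m\circ\Xi$; here the normalization $m_H(\mathbb{1}_{_H})=m(\chi_{_H})=1$ is exactly the hypothesis. The only obstruction to invariance is the discrepancy $D:=\Xi(L_\sigma\psi)-L_{\sigma^e}\Xi(\psi)\in M(K)^*$ for $\sigma\in P(H)$. Splitting $\mu=\mu_{_H}^e+(\mu-\mu_{_H}^e)$ and applying Theorem \ref{convr}(3) to the $H$-supported part collapses $D$ to $D(\mu)=-\psi\big((\sigma^e*(\mu-\mu_{_H}^e))_{_H}\big)$. On $\mu\in M^+(K)$ this is bounded in modulus by $\|\psi\|\,\phi_g(\mu)$, where $g:=L_{\sigma^e}\chi_{_H}-\chi_{_H}$; crucially $g$ is a nonnegative Borel function vanishing on $H$ (because $(p_y*p_z)(H)=1$ for $y,z\in H$), so invariance of $m$ forces $m(\phi_g)=m(L_{\sigma^e}\chi_{_H})-m(\chi_{_H})=0$. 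By positivity of $m$ the domination $|D(\mu)|\le\|\psi\|\,\phi_g(\mu)$ then gives $m(D)=0$, and since $m(L_{\sigma^e}\Xi\psi)=m(\Xi\psi)$ by $H$-invariance of $m$, we obtain $m_H(L_\sigma\psi)=m_H(\psi)$.

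Finally $(2)\Rightarrow(3)$ is immediate. For $(3)\Rightarrow(1)$ under Type L, with $c:=m(\chi_{_H})>0$ I would run the previous construction with the renormalization $m_H:=\tfrac1c\,m\circ\Xi$, so that $m_H(\mathbb{1}_{_H})=1$; the invariance computation is formally identical, resting again on $g=L_{\sigma^e}\chi_{_H}-\chi_{_H}$ being nonnegative and vanishing on $H$, with $m(\phi_g)=0$. I would flag that this last argument appears to use only $H*H\subseteq H$, so I would scrutinize precisely where the Type L hypothesis is indispensable: its genuine content is that $(p_y*p_z)(H)\in\{0,1\}$ for $y\in H$, i.e. $L_x\chi_{_H}=\chi_{_{\tilde{x}H}}$ is an honest indicator, repairing the failure $L_x\chi_{_H}\ne\chi_{_{\tilde{x}H}}$ noted before the theorem, and this is exactly the input required by the authors' (Wong-style) route. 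I expect the real difficulty throughout to be not the formal bookkeeping but the step where the convolution--restriction error terms supplied by Theorem \ref{convr} are shown to be annihilated by $m$; this is where the positivity of $m$, the invariance identity $m(L_\nu\chi_{_H})=m(\chi_{_H})$, and --- in the route matching the statement --- the Type L hypothesis must be combined.
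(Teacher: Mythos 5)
Your proof of $(1)\Rightarrow(2)$ is essentially identical to the paper's: both push a TLIM of $H$ through the positive contraction $\phi\mapsto\big(\mu\mapsto\phi(\mu^e)\big)$ and use Theorem \ref{convr}(3) to get the intertwining relation with left translation. The genuine difference is in the invariance step of $(2)\Rightarrow(1)$ and $(3)\Rightarrow(1)$. The paper estimates the discrepancy $e_H(L_\nu\phi)-L_{\nu^e}e_H(\phi)$ through Theorem \ref{convr}(2), whose bound is expressed via the sets $\tilde{y}H\cap H^c$, i.e.\ via the functions $\chi_{_{\tilde{y}H}}$; to make invariance annihilate that bound the paper must either coarsen it to $\|\phi\|\,\chi_{_{H^c}}(\mu)$ and use $m(\chi_{_{H^c}})=0$ (this is exactly where $m(\chi_{_H})=1$ is needed in $(2)\Rightarrow(1)$), or invoke Type L to replace $\chi_{_{\tilde{y}H}}(z)$ by $\chi_{_H}(y*z)=(p_y*p_z)(H)$ in $(3)\Rightarrow(1)$. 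You instead compute the discrepancy exactly: splitting $\mu=\mu_{_H}^e+\mu'$ and applying \ref{convr}(3) to the $H$-supported part gives $D(\mu)=-\psi\big((\sigma^e*\mu')_{_H}\big)$, and for $\mu\geq 0$,
\begin{equation*}
\big\|(\sigma^e*\mu')_{_H}\big\| \ = \ (\sigma^e*\mu)(H)-(\sigma^e*\mu_{_H}^e)(H) \ = \ \big(L_{\sigma^e}\chi_{_H}-\chi_{_H}\big)(\mu),
\end{equation*}
because $\sigma^e*\mu_{_H}^e$ is a positive measure of total mass $\mu(H)$ supported in $H$ (using $\su(\alpha*\beta)\subseteq\overline{\su(\alpha)*\su(\beta)}$ and $(\alpha*\beta)(K)=\alpha(K)\beta(K)$, both of which the paper itself uses). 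This bound is strictly sharper than the one coming from \ref{convr}(2), and it is annihilated by $H$-invariance of $m$ alone, with no normalization and no Type L. One presentational caution: you phrase the bound via the pointwise function $g=L_{\sigma^e}\chi_{_H}-\chi_{_H}$ and the identification $\phi_g=L_{\sigma^e}\phi_{\chi_H}-\phi_{\chi_H}$, which needs a Fubini-type identity for the bounded Borel function $\chi_{_H}$ in the semitopological setting; the displayed functional-level identity above avoids this entirely (the paper, for what it is worth, uses the same Fubini identity in its Type L computation, so you are on equal footing either way).

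The point you flagged but left unresolved deserves to be pressed: as you suspected, your argument for $(3)\Rightarrow(1)$ uses only $H*H\subseteq H$, bilinearity of $*$, Theorem \ref{convr}(3), the support and total-mass facts above, and positivity of $m$ --- it never uses Type L. I cannot find a flaw in it, and every ingredient is one the paper itself relies on. If correct, this proves the strictly stronger statement that $(3)\Rightarrow(1)$ holds for an arbitrary sub-semihypergroup, which would answer the question raised in the paper's closing remark (whether the Type L restriction can be lifted). The reason the paper needs Type L is an artifact of its route: Theorem \ref{convr}(2) throws away the values $(p_y*p_z)(H)\in(0,1)$, replacing them by indicators $\chi_{_{\tilde yH}}(z)$, and Type L is precisely the hypothesis that nothing is lost in that replacement; your exact computation of the discrepancy shows the replacement is unnecessary.
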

\begin{proof}
	
	$(1)\Rightarrow (2)$ : Let $m_0\in M(H)^{**}$ be a TLIM of $H$. Consider the restriction map $r_H: M(K)^* \ra M(H)^*$ defined as $$r_H(\phi)(\mu):= \phi(\mu^e),$$ for each $\phi\in M(K)^*, \ \mu\in M(H)$. Since $\mu^e\geq 0$ whenever $\mu\geq 0$ in $M(H)$, we immediately see that $r_H$ is a positive linear map on $M(K)^*$. The natural extension of $m_0$ to $M(K)^{**}$ is the functional $m\in M(K)^{**}$ defined as $m:= m_0\circ r_H$.
	
\noi	Now observe that for each  $\mu\in P(K)$ where $\su(\mu)  \subseteq H$,  we have that $	r_H(L_\mu\phi) =  L_{\mu_{_H}} r_H(\phi)$, since for each $\nu \in M(H)$ and  $\phi\in M(K)^*$ we have the following equality.
	\begin{eqnarray*}
	r_H(L_\mu\phi)	(\nu) &=& L_\mu\phi(\nu^e)\\
	&=& \phi(\mu*\nu^e)\\
	&=& \phi((\mu*\nu^e)_H^e)\\
	&=& r_H(\phi) (\mu*\nu^e)_H\\
	&=& r_H(\phi) (\mu_H *\nu_H^e) \ = \  r_H(\phi) (\mu_H *\nu) \ =  L_{\mu_{_H}} r_H(\phi) (\nu),
	\end{eqnarray*}
where	the third and sixth equalities follow  from Remark \ref{remext} and since $$\su(\mu*\nu^e) =  \overline{\su(\mu) * \su(\nu^e)} \subseteq  \overline{H*H} \subseteq H$$ and the fifth equality follows from Theorem \ref{convr}. Hence for any $\nu\in P(K)$ such that $\su(\nu)\subseteq H$ and $\phi\in M(K)^*$ we have that
\begin{eqnarray*}
	m(L_\nu\phi) \ = \ m_0\circ r_H(L_\nu\phi) &=&  m_0\big{(} L_{\nu_{_H}} r_H(\phi)\big{)}\\
	&=& m_0\big{(} r_H(\phi)\big{)} \ = \ m(\phi),
	\end{eqnarray*}
where the last equality follows since $m_0$ is a TLIM. Thus we see that $m$ is topologically $H$-invariant. Moreover, since  $r_H$ is a positive linear map, we have that $m$ is a positive linear functional on $M(K)^*$ as well, such that $||m||\leq 1$. Finally,  since $$m(\chi_{_H}) = m_0\circ r_H(\chi_H)  = m_0(\chi_{_H}) = 1,$$ we see that $m$ is indeed a mean on $K$, as needed.  

\vspace{0.03in}

\noi $(2)\Rightarrow (1)$ :  On the other hand, let $m$ be a topological left  $H$-invariant mean in $\mathcal{M}(K)$ such that $m(\chi_{_H})
=1$.  Consider the extension map $e_H: M(H)^* \ra M(K)^*$ defined as $$e_H(\phi)(\mu):= \phi(\mh),$$ for each $\phi\in M(H)^*, \ \mu\in M(K)$.  Hence the natural restriction of $m$ to $M(H)^{**}$ is the functional $m_0\in M(H)^{**}$ defined as $m_0:= m \circ e_H$. Similarly as before, we immediately see that $e_H$ is a positive linear map on $M(K)^*$, and hence $m_0\in \mathcal{M}(H)$.

Now to see that $m_0$ is indeed topological left invariant, first note that  for each $\phi\in M(H)^*$,  $\nu\in P(H)$ and $\mu\in M^+(K)$ we have the following:
\begin{eqnarray*}
	\big{|} e_H(L_\nu\phi) (\mu) - L_{{\nu^e}} e_H(\phi) (\mu)\big{|} &=& 	\big{|} L_\nu\phi (\mh) -  e_H(\phi) (\nu^e*\mu)\big{|}\\
	&=&  	\big{|} \phi (\nu * \mh) -  \phi \big{(}(\nu^e*\mu)_{_H}\big{)}\big{|}\\
	&\leq &  ||\phi|| \:  ||((\nu^e)_{_H} * \mh) - (\nu^e*\mu)_{_H}||\\
	&\leq  & ||\phi|| \:  |\mu|(H^c) \:  \nu^e(H) \ = \ ||\phi|| \: \mu(H^c) \ = \ ||\phi||\: \chi_{_{H^c}} (\mu),
	\end{eqnarray*}
where the first inequality follows since $\su(\nu^e) \subseteq H$ and the second inequality follows from Theorem \ref{convr}. Hence for each $\phi\in M(H)^*$ and $\nu\in P(H)$ we have the following invariance, as required.
\begin{eqnarray*}
\big{|} m_0(L_\nu\phi) - m_0 (\phi)\big{|} &=& 	\big{|} m(e_H(L_\nu\phi)) - m (e_H(\phi))\big{|}\\
&=& 	\big{|} m\big{(}e_H(L_\nu\phi) - L_{\nu^e}e_H(\phi)\big{)}\big{|}\\
&\leq & ||\phi|| \: m\big{(}\chi_{_{H^c}}\big{)} \ = \ 0,
\end{eqnarray*}
where the second equality follows since $m\in \mathcal{M}(K)$ is topological $H$-invariant, the inequality follows since $m$ is a positive linear functional and the last equality holds true since we have that $$ m(\chi_{_{H^c}}) = m(\chi_{_{K}} - \chi_{_{H}}) = m(\chi_{_{K}}) - m(\chi_{_{H}}) = 0.$$

\vspace{0.03in}

\noi Now we further assume that $H$ is of Type L, i.e, $\tilde{x}H= x^{-1}H$  for each $x\in H$. Since it is sufficient to show that $(3)\Rightarrow (1)$, let $m$ be a topological left  $H$-invariant mean in $\mathcal{M}(K)$ such that $m(\chi_{_H})=:\alpha>0$. Set $m_0:= \alpha^{-1} (m \circ e_H)$. It then follows readily as above that $m_0\in \mathcal{M}(H)$. Now proceeding as above to check topological invariance of  $m_0$, we first note that for each $\phi\in M(H)^*$,  $\nu\in P(H)$ and $\mu\in M^+(K)$ we have the following:
\begin{eqnarray*}
	\big{|} e_H(L_\nu\phi) (\mu) - L_{{\nu^e}} e_H(\phi) (\mu)\big{|} &=& 	\big{|} L_\nu\phi (\mh) -  e_H(\phi) (\nu^e*\mu)\big{|}\\
	&=&  	\big{|} \phi (\nu * \mh) -  \phi \big{(}(\nu^e*\mu)_{_H}\big{)}\big{|}\\
	&\leq &  ||\phi|| \:  ||((\nu^e)_{_H} * \mh) - (\nu^e*\mu)_{_H}||\\
	&\leq  &  ||\phi|| \int_H \mu(\tilde{y}H\cap H^c) \ d\nu^e(y)\\
	&=& ||\phi|| \int_H \Big{(} \mu(\tilde{y}H\setminus H) \Big{)} \ d\nu^e(y)\\
	&=& ||\phi||  \Big{(}\int_H \mu(\tilde{y}H)  \ d\nu^e(y) - \mu(H)\Big{)}\\
	&=& ||\phi||  \Big{(}\int_H \int_K \chi_{_{\tilde{y}H}}(z) \ d\mu(z)  \ d\nu^e(y) - \mu(H)\Big{)}\\
	&=& ||\phi||  \Big{(}\int_H \int_K \chi_{H}(y*z) \ d\mu(z)  \ d\nu^e(y) - \mu(H)\Big{)}\\
	&=& ||\phi||  \Big{(}\int_K \int_K \chi_{H}(y*z) \  d\nu^e(y) \ d\mu(z)   - \mu(H)\Big{)}\\
	&=& ||\phi||  \big{(} \chi_{H}(\nu^e*\mu) - \mu(H)\big{)} \ = \ ||\phi||  \big{(} L_{\nu^e}\chi_{H} - \chi_H\big{)}(\mu),
\end{eqnarray*}
where the third equality follows since $H\subseteq \tilde{y}H$ as $y\in H$ and $H$ is a sub-semihypergroup, the sixth equality holds true since $H$ is of Type L and the second inequality follows from Theorem \ref{convr}. Hence for each $\phi\in M(H)^*$ and $\nu\in P(H)$ we have the following invariance, as required.
\begin{eqnarray*}
	\big{|} m_0(L_\nu\phi) - m_0 (\phi)\big{|} &=& 	\alpha^{-1} \big{|} m(e_H(L_\nu\phi)) - m (e_H(\phi))\big{|}\\
	&=& 	\alpha^{-1}	\big{|} m\big{(}e_H(L_\nu\phi) - L_{\nu^e}e_H(\phi)\big{)}\big{|}\\
	&\leq & 	\alpha^{-1} ||\phi|| \: m\big{(}L_{\nu^e}\chi_\sh - \chi_\sh\big{)} \ = \ 0,
\end{eqnarray*}
where the second and last equalities follow since $m\in \mathcal{M}(K)$ is topological $H$-invariant and  the inequality follows since $m$ is a positive linear functional on $M(K)^*$.
	\end{proof}

\begin{remark}

In particular, since any sub-semigroup of a locally compact semigroup is of Type L, the above theorem immediately provides an affirmative answer to the open question raised in \cite{WO}  regarding the equivalence of the statements $(2)$ and $(3)$ in the above theorem, in the setting of locally compact (semitopological) semigroups. Moreover, Theorems \ref{mainthm0}, \ref{convr} and \ref{thm2} potentially provide us with deeper insight into the convolution of measures on different coset, orbit and homogeneous spaces in terms of the respective  structures outlined  in Remark \ref{remex}.  

\end{remark}

\begin{remark}
	Since any commutative F-algebra is amenable \cite{LA0}, we immediately see that commutative semihypergroups are topological left amenable. In addition, it follows trivially that any non-commutative semihypergroup $(K, *)$ where the convolution is given as $p_x*p_y=p_y$ for each $x, y \in K$, is topological left amenable .
	
	As mentioned at the beginning of this section, although topological left amenability of a  semihypergroup immediately implies left amenability, it is still an open question whether the converse is true for general semitopological semihypergroups.  Moreover, although Theorem \ref{thm2} extends the results of \cite{WO} even for the case of locally compact (semitopological) semigroups, it is not known whether in the setting of a semihypergroup, the restriction that $H$ be a \textit{Type L} sub-semihypergroup, can be lifted.
\end{remark}

\section*{Acknowledgement}

\noi The author would  like to gratefully acknowledge the financial support provided by the Indian Institute of Technology Kanpur, India and Harish-Chandra Research Institute, India, where initial phases of the work were done. She is also grateful to the reviewers for the valuable comments and suggestions which led to a better representation of the article.

%


%


\begin{thebibliography}{}


\bibitem{AR}{Richard Arens \textit{The Adjoint of a Bilinear Operation}. Proc. Amer. Math. Soc., Vol. 2,  1951, 839-848.}

\bibitem{CB1} {Bandyopadhyay, Choiti \textit{Analysis on semihypergroups: function spaces, homomorphisms and ideals}. Semigroup Forum 100 (2020), no. 3, 671-697.}

\bibitem{CB2} {Bandyopadhyay, Choiti \textit{Free Product of Semihypergroups}. Semigroup Forum 102 (2021), no. 1, 28–47.}

\bibitem{CB3} {Bandyopadhyay, Choiti; Mohanty, Parasar 	\textit{Equality in Hausdorff-Young for hypergroups}. Banach J. Math. Anal. 16 (2022), no. 4, Paper No. 62, 31 pp. }

\bibitem{CB4} {Bandyopadhyay, Choiti	\textit{Fixed Points and Continuity of Semihypergroup Actions}. (submitted) preprint: 	arXiv:2202.05956v2 [math.FA]}


\bibitem{BH} {Bloom, W, Heyer, H \textit{Harmonic Analysis of Probability Measures on Hypergroups}. Walter de Gruyter Studies in Mathematics 20, 1995.}	

\bibitem{MI}{Berglund, J. F., Junghenn, H. D., Milnes, P. : \textit{Analysis on Semigroups}. Canadian Mathematical Society Series of Monographs and Advanced Texts, Wiley Interscience Publication. 24 (1989)}






\bibitem{DA}{Day, M. M.  \textit{Amenable Semigroups}. Illinois J. Math. 1, 509-544 (1957)}

\bibitem{DU}{ Dunkl, C. F. \textit{The Measure Algebra of a Locally Compact Hypergroup}. Trans. Amer. Math. Soc. 179, 331-348 (1973)}


\bibitem{JE}{Jewett, R. I. \textit{Spaces with an Abstract Convolution of Measures}. Advances in Mathematics. 18, no. 1,  1-101 (1975)}

\bibitem{JO} {Johnson, Barry Edward \textit{Cohomology in Banach algebras}. Memoirs of the American Mathematical Society, No. 127. American Mathematical Society, Providence, R.I., 1972. iii+96 pp.}

\bibitem{LA0}{Lau, Anthony To Ming 	\textit{Analysis on a class of Banach algebras with applications to harmonic analysis on locally compact groups and semigroups}.	Fund. Math. 118 (1983), no. 3, 161-175.}



\bibitem{MT} {Michael, E. : \textit{Topologies on Spaces of Subsets}. Trans. Amer. Math. Soc. 71,  152-182 (1951)}


\bibitem{JP} {Pier, Jean-Paul  \textit{	Amenable Banach algebras}.	Pitman Research Notes in Mathematics Series, 172. Longman Scientific \& Technical, Harlow; John Wiley \& Sons, Inc., New York, 1988. viii+161 pp.}








\bibitem{RO}{Ross, Kenneth A. \textit{Centres of Hypergroups.} Trans. Amer. Math. Soc. 243 (1978), 251-269. }

\bibitem{SK}{Skantharajah, Mahatheva \textit{ Amenable hypergroups}. Illinois J. Math. 36 (1992), no. 1, 15-46.}

\bibitem{SP}{Spector, R. : \textit{Apercu de la theorie des hypergroups, (French) Analyse harmonique sur les groupes de Lie (Sém. Nancy-Strasbourg, 1973-75)}.  Lecture Notes in Math. 497, Springer-Verlag, New York, 643-673 (1975)}

\bibitem{WO2} {Wong, James C. S. \textit{Topologically stationary locally compact groups and amenability}. Trans. Amer. Math. Soc. 144 (1969), 351-363. }

\bibitem{WO0}{Wong, J. \textit{Invariant Means on Locally Compact Semigroups}. Proc. Amer. Math. Soc.  31, no. 1  (1972), 39-45.}

\bibitem{WO1}{Wong, James C. S. \textit{An ergodic property of locally compact amenable semigroups}. Pacific J. Math. 48 (1973), 615-619.}

\bibitem{WO}{Wong, James C. S. \textit{A characterisation of locally compact amenable subsemigroups}. Canad. Math. Bull. 23 (1980), no. 3, 305-312.}





\end{thebibliography}


\end{document}